\newtheorem{theorem}{Theorem}[section]
\newtheorem{lemma}[theorem]{Lemma}
\newtheorem{proposition}[theorem]{Proposition}
\newtheorem{definition}[theorem]{Definition}
\theoremstyle{definition}
\newtheorem{remark}[theorem]{Remark}
\newcommand{\h}{{\mathbb{H}}}
\newcommand{\secref}[1]{Section~\ref{#1}}
\newcommand{\thmref}[1]{Theorem~\ref{#1}}
\newcommand{\lemref}[1]{Lemma~\ref{#1}}
\newcommand{\eqnref}[1]{Equation~\ref{#1}}
\numberwithin{equation}{section}
\begin{document}
\title[Fundamental domain on Complex Bidisk]{EQUIDISTANT HYPERSURFACES OF THE COMPLEX BIDISK $\mathbb{H}^2_{\mathbb{C}}\times \mathbb{H}^2_{\mathbb{C}}$
}

\author[K. Gongopadhyay]{KRISHNENDU GONGOPADHYAY}

\author[L. Kundu]{LOKENATH KUNDU}

\author[A. Tiwari]{ADITYA TIWARI}

\address{
Indian Institute of Science Education and Research (IISER) Mohali,
		Knowledge City, Sector 81, SAS Nagar, Punjab 140306, India}
	\email{krishnendu@iisermohali.ac.in}

\address{Indian Institute of Science Education and Research (IISER) Mohali,
		Knowledge City, Sector 81, SAS Nagar, Punjab 140306, India.}
	\email{lokenath@iisermohali.ac.in}
\address{
Indian Institute of Science Education and Research (IISER) Mohali,
		Knowledge City, Sector 81, SAS Nagar, Punjab 140306, India}
	\email{adityatiwari@iisermohali.ac.in}

\dedicatory{ In memory of Todd A. Drumm}
\makeatletter
\@namedef{subjclassname@2020}{\textup{2020} Mathematics Subject Classification}
\makeatother

\subjclass[2020]{}

\keywords{}

\begin{abstract}
We consider the isometries of the complex hyperbolic bidisk, that is, the product space \( \mathbb{H}^2_{\mathbb{C}} \times \mathbb{H}^2_{\mathbb{C}} \), where each factor \( \mathbb{H}^2_{\mathbb{C}} \) denotes the complex hyperbolic plane.  We investigate the Dirichlet domain formed by the action of a cyclic subgroup $(g_1, g_2)$, where each $g_i$ is loxodromic. We prove that such a Dirichlet domain has two sides. 
\end{abstract}

\maketitle

\section{Introduction}

Suppose a discrete group $G$ is acting properly discontinuously and co-compactly on a metric space  $X$. The Dirichlet domain associated to this action is the convex subset defined as 
\begin{equation*}
    D(x_0)= \{ x\in X~|~ d(x,x_0)\leq d(g(x),x_0) ~\forall g \in G \setminus\{1\}\}
\end{equation*} for a given base point $x_0$. The Dirichlet domain gives a geometric way to understand the quotient space $X/G$. The Dirichlet domain is a fundamental polyhedron that tessellates $X$ without overlapping interiors. A well-known construction of a Dirichlet domain arises when 
$G$ is a discrete group of isometries acting on the hyperbolic plane.

One natural question is: how many sides can a Dirichlet domain for a cyclic group possess? This question is inspired by related investigations in other settings, e.g. J\o{}rgensen’s work on hyperbolic 3-space \cite{jor}, the work of  Drumm and Poritz \cite{port} and Phillips’s work on the fundamental domains for cyclic isometry groups of the complex hyperbolic space \cite{Philips}.

The bidisk, defined as the Cartesian product of two hyperbolic planes, frequently appears in the literature, particularly as a standard example in the study of symmetric spaces. However, relatively few papers specifically focus on its geometric properties, see e.g \cite{farb}, \cite{CDL}, \cite{port}. 
In \cite{CDL}, Drumm et. al. investigated the above problem for the bidisk 
${\h}^2 \times {\h}^2$. Drumm et. al. proved that a Dirichlet domain with a basepoint lying in the invariant flat must have exactly two faces. However, if the basepoint is chosen outside the flat, the Dirichlet domain may have more than two faces. 

Inspired by the work of Drumm and collaborators, we investigate the analogous question in the setting of the \emph{complex hyperbolic bidisk}, that is, the product space \( \mathbb{H}^2_{\mathbb{C}} \times \mathbb{H}^2_{\mathbb{C}} \), where each factor \( \mathbb{H}^2_{\mathbb{C}} \) denotes the complex hyperbolic plane equipped with the standard Bergman metric. We refer to this space simply as the \emph{complex bidisk}. It inherits a natural product metric $\rho$, defined as the square root of the Euclidean sum of the complex hyperbolic metrics evaluated component wise: 
\[
\rho\big((z_1, z_2), (w_1, w_2)\big) = \sqrt{d_{\mathbb{H}^2_{\mathbb{C}}}(z_1, w_1)^2 + d_{\mathbb{H}^2_{\mathbb{C}}}(z_2, w_2)^2},
\]
where \( d_{\mathbb{H}^2_{\mathbb{C}}} \) denotes the complex hyperbolic distance in each factor. The isometry group of the complex bidisk, equipped with $\rho$,  is given by
\[
\left(\mathrm{Isom}(\mathbb{H}^2_{\mathbb{C}})\right)^2 \rtimes \langle i \rangle,
\]
where \( i \) denotes the involution \( (z_1, z_2) \mapsto (z_2, z_1) \). We ask for an understanding of the Dirichlet domains for the \emph{cyclic subgroups} of \( \mathrm{Isom}(\mathbb{H}^2_{\mathbb{C}} \times \mathbb{H}^2_{\mathbb{C}}) \). Our main result is the following.
\begin{theorem}\label{mainth}
    Let \(\gamma = (g_1, g_2) \in \mathrm{Isom}(\mathbb{H}^2_{\mathbb{C}} \times \mathbb{H}^2_{\mathbb{C}})\), where \(g_1\) and \(g_2\) are loxodromic isometries of \(\mathbb{H}^2_{\mathbb{C}}\). Let \(z_1, z_2 \in \mathbb{H}^2_{\mathbb{C}}\) be points lying on the invariant axes of \(g_1\) and \(g_2\), respectively, and set \(z = (z_1, z_2)\). Then the bisectors \(E(z, \gamma(z))\) and \(E(z, \gamma^{-1}(z))\) correspond to two faces of the Dirichlet domain for the action of \(\langle \gamma \rangle\) on \(\mathbb{H}^2_{\mathbb{C}} \times \mathbb{H}^2_{\mathbb{C}}\).
    \end{theorem}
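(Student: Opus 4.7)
The plan is to exploit a one-dimensional structure imposed on the orbit by the hypothesis that $z_i$ lies on the axis of $g_i$. With this hypothesis, $\{g_i^n(z_i)\}_{n \in \mathbb{Z}}$ is an arithmetic progression of spacing $\ell_i := d_{\mathbb{H}^2_{\mathbb{C}}}(z_i, g_i(z_i))$ along the axis of $g_i$. Passing to the product metric, the full orbit $\{\gamma^n(z)\}$ lies on a single geodesic $\beta$ of the bidisk and is itself an arithmetic progression on $\beta$ of spacing $L := \sqrt{\ell_1^2 + \ell_2^2} = \rho(z, \gamma(z))$. Thus the axis hypothesis is exactly what collapses the orbit onto a single geodesic.

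The key analytic input is that the bidisk is CAT($0$), being the product of two CAT($0$) complex hyperbolic planes. In a CAT($0$) space the squared distance function is convex, so for any $w$ the restriction $\phi_w(s) := \rho(w, \beta(s))^2$ is convex in $s$. Set $h(n) := \phi_w(nL) - \phi_w(0) = \rho(w, \gamma^n(z))^2 - \rho(w, z)^2$, so that $w \in D(z)$ is equivalent to $h(n) \geq 0$ for every $n \neq 0$. For $n \geq 2$, writing $L = \tfrac{n-1}{n}\cdot 0 + \tfrac{1}{n}\cdot nL$ and invoking convexity of $\phi_w$ gives $\phi_w(L) \leq \tfrac{n-1}{n}\phi_w(0) + \tfrac{1}{n}\phi_w(nL)$, equivalently $h(n) \geq n\cdot h(1)$; the analogous estimate $h(n) \geq |n|\cdot h(-1)$ holds for $n \leq -2$. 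Consequently $h(1), h(-1) \geq 0$ already force $h(n) \geq 0$ for every nonzero integer, so $D(z)$ coincides with the intersection of the two closed half-spaces bounded by $E(z, \gamma(z))$ and $E(z, \gamma^{-1}(z))$ and containing $z$.

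It remains to check that each bisector contributes a genuine codimension-one face rather than being redundant. For this, consider the midpoint $m := \beta(L/2)$ of $z$ and $\gamma(z)$ along $\beta$. Then $m \in E(z, \gamma(z))$, and a direct calculation on $\beta$ gives $\rho(m, \gamma^n(z)) = L\,|n - \tfrac{1}{2}| > \tfrac{L}{2} = \rho(m, z)$ for every $n \notin \{0,1\}$. By continuity, an entire neighborhood of $m$ within $E(z, \gamma(z))$ is contained in $D(z)$, so this bisector is a bona fide face; the symmetric argument yields a face on $E(z, \gamma^{-1}(z))$. I do not anticipate a serious obstacle: once one recognizes that the axis hypothesis aligns the orbit along a geodesic, CAT($0$) convexity of the squared distance function does all the real work.
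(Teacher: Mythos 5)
Your argument is correct, and it takes a genuinely different route from the paper's. The paper also uses the axis hypothesis to put $\gamma^{-1}(z), z, \gamma(z)$ on a common geodesic, but its key step is a disjointness statement: if the two bisectors $E(z,\gamma(z))$ and $E(z,\gamma^{-1}(z))$ met at a point $x$, the three collinear orbit points would lie on a metric sphere centered at $x$, contradicting the fact that in a Hadamard manifold a geodesic meets a sphere in at most two points; the conclusion that no other bisector contributes is then drawn from an ``invisibility'' lemma (\lemref{key}), which rests on the decomposition $E(z,w)=\cup_k E_k(z,w)$ and a connectedness argument, supported by the earlier analysis of the level sets $S_k$ and their boundary behavior via Busemann functions. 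You bypass all of that machinery: after observing that the whole orbit is an arithmetic progression $\beta(nL)$ along a single geodesic $\beta$, you use convexity of the squared distance function in a CAT($0$) space to get the quantitative domination $h(n)\ge n\,h(1)$ for $n\ge 2$ (and $h(n)\ge |n|\,h(-1)$ for $n\le -2$), which immediately shows $D(z)$ is exactly the intersection of the two closed half-spaces determined by $n=\pm 1$; the midpoint computation $\rho(\beta(L/2),\gamma^n(z))=L|n-\tfrac12|$ then certifies that each bisector is a non-redundant face. What your approach buys is a shorter, self-contained proof with a stronger conclusion (an explicit description of $D(z)$ as a two-sided region, not merely that the two bisectors bound it), and it only needs plain convexity rather than strict convexity of balls; what the paper's route buys is structural information about the equidistant hypersurfaces themselves (the fibration by $E_k=S_k^1\times S_k^2$ and their ideal boundary points), which is of independent interest but is not logically needed for the theorem once your convexity reduction is in place. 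One small point worth making explicit in your write-up: the continuity step at the midpoint $m$ uses your reduction to the two inequalities $h(\pm 1)\ge 0$ (so that only the open condition $h(-1)>0$ needs to persist near $m$); without that reduction one would have to control infinitely many conditions, which your inequality $h(n)\ge |n|\min\{h(1),h(-1)\}$ also does uniformly.
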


The proof of the above theorem relies on the metric properties of the complex hyperbolic space, particularly the fact that it is a Hadamard manifold, an essential aspect of the argument. Equally significant is the existence of a unique invariant axis (geodesic) which joins the fixed points of a loxodromic element of the complex hyperbolic space. This invariant axis lies on the one-dimensional complex line connecting the fixed points.

We organize our paper as follows. In \secref{sec:preliminaries}, we provide a brief overview of complex hyperbolic spaces.  In \secref{curv}, we classify all subspaces with holomorphic sectional curvature \( -1 \). In \secref{sec:isometries}, we derive the isometry group of the product space \( \mathbb{H}^2_{\mathbb{C}} \times \mathbb{H}^2_{\mathbb{C}} \). 
In subsequent sections, we explicitly describe the equidistant surfaces in this product space. We prove the main theorem in \secref{sec:dirichlet}.

\subsection*{Acknowledgements}
Part of this research was carried out during the International Centre for Theoretical Sciences (ICTS) program \emph{New Trends in Teichmüller Theory}. We thank John Parker for valuable discussions during this ICTS meeting.

\section{Complex Hyperbolic Plane \texorpdfstring{$\mathbb{H}^2_\mathbb{C}$}{H2C} and complex bidisc}\label{sec:preliminaries}

The complex hyperbolic plane $\mathbb{H}^2_\mathbb{C}$ is a two-dimensional complex manifold endowed with a Kähler metric of constant holomorphic sectional curvature $-1$. It serves as the non-Euclidean symmetric space associated with the Lie group $\mathrm{SU}(2,1)$ and can be viewed as the complex analogue of the real hyperbolic plane.

\subsection{Hermitian Form and the Ball Model}

Let $\mathbb{C}^{2,1}$ denote $\mathbb{C}^3$ equipped with the Hermitian form of signature $(2,1)$ given by:
\[
\langle z, w \rangle = z_0 \bar{w}_0 + z_1 \bar{w}_1 - z_2 \bar{w}_2,
\] corresponding to the Hermitian matrix \begin{equation}
      J=~\begin{bmatrix}
      1 & 0 & 0\\
      0 & 1 & 0\\
      0 & 0 & -1
  \end{bmatrix}
  \end{equation}
for $z, w \in \mathbb{C}^3$. The complex hyperbolic plane is defined as the projectivization of the negative vectors:
\[
\mathbb{H}^2_\mathbb{C} = \left\{ [z] \in \mathbb{P}(\mathbb{C}^{2,1}) : \langle \mathbf{z}, \mathbf{z} \rangle < 0 \right\}.
\]
The complex hyperbolic metric is given by Bergman metric:  \begin{equation*}
      \cosh^2\bigg(\frac{d_{\mathbb{H}^2_{\mathbb{C}}}(z,w)}{2}\bigg)=\frac{\langle \mathbf{z},\mathbf{w}\rangle \langle \mathbf{w},\mathbf{z} \rangle}{\langle \mathbf{z},\mathbf{z} \rangle \langle \mathbf{w},\mathbf{w}\rangle}.
  \end{equation*} 
which makes it a complete, simply connected Kähler manifold with constant holomorphic sectional curvature $-1$.

\subsection*{The Ball Model}
This model is given by the unit ball in $\mathbb{C}^2$:
\[
\mathbb{B}^2 = \left\{ z = (z_1, z_2) \in \mathbb{C}^2 : \|z\|^2 = |z_1|^2 + |z_2|^2 < 1 \right\}, 
\]
which can be obtained by projecting the above model along the projective coordinates. Given a point $x=(x_1, x_2) \in \mathbb{B}^2$, we can lift it to homogeneous coordinates in $\mathbb{C}^{2,1}$ as:
\[
\mathbf{x} = \begin{bmatrix} 1 \\ x_1 \\ x_2 \end{bmatrix}. 
\]
The induced complex hyperbolic metric on this model is given by
\begin{equation}
  \cosh^2\left( \frac{d(x, z)}{2} \right) = \frac{|\langle x, z \rangle|^2}{(1 - \|x\|^2)(1 - \|z\|^2)}.
  \tag{2}
\end{equation}

We shall mostly use the ball model in this paper.

\subsection{Metric on the Complex Bidisc}
Analog the real bidisc we are defining the complex bidisc as a product of two copies of the complex hyperbolic plane $\mathbb{H}^2_{\mathbb{C}}$ endowed with the product metric:
\begin{equation*}
      \rho ((z_1,z_2),(w_1,w_2)):= \sqrt{(d_{\mathbb{H}^2_{\mathbb{C}}}(z_1,w_1))^2+(d_{\mathbb{H}^2_{\mathbb{C}}}(z_2,w_2))^2}.
  \end{equation*}
Here $(z_1,z_2),(w_1,w_2) \in \mathbb{H}^2_{\mathbb{C}} \times \mathbb{H}^2_{\mathbb{C}}.$

\subsection{Busemann Functions in Complex Hyperbolic Space}

Let $\mathbb{H}^2_{\mathbb{C}}$ denote the complex hyperbolic space of complex dimension $n$, realized as the unit ball in $\mathbb{C}^2$ endowed with the Bergman metric, or alternatively as the symmetric space $\mathrm{SU}(2,1)/\mathrm{S(U}(2) \times \mathrm{U}(1))$. This space is a non-positively curved, rank-one symmetric space of noncompact type with a rich boundary structure and complex geometric features.

A key tool in the asymptotic analysis of $\mathbb{H}^2_{\mathbb{C}}$ is \emph{Busemann function}, which provides a way to quantify the behavior of points relative to a given direction at infinity. Let $\gamma: [0, \infty) \to \mathbb{H}^2_{\mathbb{C}}$ be a unit-speed geodesic ray. The Busemann function associated with $\gamma$ is defined as
\[
b_{\gamma}(x) = \lim_{t \to \infty} \left[ d(x, \gamma(t)) - t \right], \quad x \in \mathbb{H}^n_{\mathbb{C}},
\]
where $d$ denotes the complex hyperbolic distance. This function measures the distance from $x$ to the horosphere centered at the ideal boundary point $\gamma(\infty)$ that passes through $\gamma(0)$.

The function $b_\gamma$ is smooth in $\mathbb{H}^2_{\mathbb{C}}$, and its level sets, called \emph{horospheres}, are smooth hypersurfaces orthogonal to geodesics asymptotic to the same boundary point as $\gamma$. These horospheres are not totally geodesic in the complex hyperbolic metric, reflecting the non-trivial interaction between the real and complex structures.

In the ball model of $\mathbb{H}^2_{\mathbb{C}}$, boundary points correspond to points on the unit sphere $S^{3}$ in $\mathbb{C}^2$. Given a boundary point $\xi \in \partial \mathbb{H}^2_{\mathbb{C}}$ and a base point $o \in \mathbb{H}^2_{\mathbb{C}}$, one may also define the Busemann function via sequences approaching $\xi$:
\[
b_{\xi,o}(x) = \lim_{y \to \xi} \left[ d(x,y) - d(o,y) \right],
\]
where $y$ tends to $\xi$ nontangentially. This definition is independent of the particular sequence chosen, and $b_{\xi,o}(o) = 0$ by construction.

The Busemann functions play an essential role in the study of asymptotic geometry, boundary measures, and dynamics of isometries on $\mathbb{H}^2_{\mathbb{C}}$. 

In this work, we will utilize the properties of Busemann functions to study the boundary of level sets.

\subsection{Subspace with holomorphic curvature $-1$}\label{curv}
\begin{definition}
    The \textbf{holomorphic sectional curvature} of a plane $P$ in $T_x \mathbb{H}^2_{\mathbb{C}}$ is defined as
    \[K(P) = R(X,JX,JX,X)\]
    where $X$ is a unit vector in $P$, $R$ is the $(0$-$4)$ curvature tensor corresponding to the given metric and $x\in \mathbb{H}^2_{\mathbb{C}}$.
\end{definition}
    
\begin{remark}
    The holomorphic sectional curvature $K(P)$ is independent of the choice of $X$.
\end{remark}

\begin{lemma} \label{lem}
   On a product (Riemannaian) manifold $(M,g) = (N_1,h_1) \times (N_2,h_2)$, let $p = (x,y) \in M$ and assume $X \in T_x(N_1)$ and $Y \in T_y(N_2)$. Then the sectional curvature 
   \[K(\Pi) = 0\]
   where $\Pi = \text{span}\{X,Y\}$ is a $2$-plane in $T_pM$.
   \end{lemma}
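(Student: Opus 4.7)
The plan is to exploit the block-diagonal structure of the product metric and the resulting splitting of the Levi-Civita connection. Extend $X$ and $Y$ to local vector fields $\tilde X, \tilde Y$ on a neighborhood of $p$ via pullback through the projections $\pi_1 \colon M \to N_1$ and $\pi_2 \colon M \to N_2$, so that $\tilde X$ is everywhere tangent to the $N_1$-factor, $\tilde Y$ is everywhere tangent to the $N_2$-factor, and $[\tilde X, \tilde Y] = 0$. Since $g = h_1 \oplus h_2$, the tangent subbundles associated to the two factors are mutually $g$-orthogonal, so in particular $g(X,Y) = 0$, confirming that $X$ and $Y$ span a genuine $2$-plane with sectional curvature
\[
K(\Pi) = \frac{R(X,Y,Y,X)}{g(X,X)\,g(Y,Y)}.
\]
Thus it suffices to show that the numerator vanishes.

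Next I would invoke the standard fact that the Levi-Civita connection of a Riemannian product decomposes as $\nabla^M = \nabla^{N_1} \oplus \nabla^{N_2}$; equivalently, if $U$ is tangent to one factor and $V$ to the other, then $\nabla^M_U V = 0$. This can be verified either via the Koszul formula together with the block form of $g$, or by observing that both projections are Riemannian submersions with totally geodesic fibers. Applied to $\tilde X, \tilde Y$, this yields $\nabla^M_{\tilde X}\tilde Y = 0$ and $\nabla^M_{\tilde Y}\tilde X = 0$ on the neighborhood, while $\nabla^M_{\tilde Y}\tilde Y$ agrees with the pullback of $\nabla^{N_2}_Y Y$ and is therefore itself tangent to the $N_2$-factor.

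Combining these observations with the definition of the Riemann curvature tensor,
\[
R(\tilde X,\tilde Y)\tilde Y = \nabla^M_{\tilde X}\nabla^M_{\tilde Y}\tilde Y - \nabla^M_{\tilde Y}\nabla^M_{\tilde X}\tilde Y - \nabla^M_{[\tilde X,\tilde Y]}\tilde Y = \nabla^M_{\tilde X}\bigl(\nabla^M_{\tilde Y}\tilde Y\bigr) = 0,
\]
since $\nabla^M_{\tilde Y}\tilde Y$ is $N_2$-tangent while $\tilde X$ is $N_1$-tangent. Evaluating at $p$ gives $R(X,Y,Y,X) = 0$, and hence $K(\Pi) = 0$. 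The only genuine content in this argument is the product decomposition of the Levi-Civita connection; once that is available, everything else is routine bookkeeping, so I expect no serious obstacle.
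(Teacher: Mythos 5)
Your proposal is correct and follows essentially the same route as the paper: both rest on the splitting $\nabla^M=\nabla^{N_1}\oplus\nabla^{N_2}$ of the Levi-Civita connection for the product metric, the vanishing of the mixed terms $\nabla_{\tilde X}\tilde Y$, $\nabla_{\tilde Y}\tilde X$, $[\tilde X,\tilde Y]$, and the curvature formula for $K(\Pi)$. If anything, you are slightly more explicit than the paper in noting that $\nabla_{\tilde Y}\tilde Y$ remains tangent to the $N_2$-factor, which is the step that actually forces $R(X,Y)Y=0$.
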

   \begin{proof}
       Given $p = (x,y) \in M$ and for the local coordinates $(x^1,x^2,\dots,x^{n_1},y^1,y^2,\dots,y^{n_2})$, we have
    \[X = \sum_{i=1}^{n_1} X^i \frac{\partial}{\partial x^i}\;\;\text{and}\;\;\;Y = \sum_{i=1}^{n_2} Y^i \frac{\partial}{\partial y^i}\]
    as $X \in T_x(N_1)$ and $Y \in T_y(N_2)$.
    Therefore,
    \begin{equation*}
        \triangledown_XY = 0 = \triangledown_YX = [X,Y].
    \end{equation*}
    Also
    \[R(X,Y,Z) = \triangledown_X\triangledown_YZ-\triangledown_Y\triangledown_XZ-\triangledown_{[X,Y]}Z\]
    and
    \begin{equation*}
        K(\Pi) = \frac{g(R(X,Y,Y),X)}{g(X,X)g(Y,Y)-g(X,Y)^2}.
    \end{equation*}
    Therefore, $K(\Pi) = 0$.
   \end{proof}

\begin{proposition}
Any $2-$plane in $T(\mathbb{H}^2_{\mathbb{C}} \times \mathbb{H}^2_{\mathbb{C}})$ with holomorphic sectional curvature $-1$ is isomorphic to $\mathbb{H}^2_{\mathbb{C}} \times \{z\}$ or $\{z\} \times \mathbb{H}^2_{\mathbb{C}}$.
\end{proposition}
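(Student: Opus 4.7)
My approach is to exploit the product structure of the curvature tensor on the complex bidisk $\HC \times \HC$. First, I note that the definition given above only applies to $J$-invariant real $2$-planes, where $J$ denotes the product complex structure acting componentwise on $T_p(\HC \times \HC) = T_{p_1}\HC \oplus T_{p_2}\HC$. Any plane $\Pi$ carrying a well-defined holomorphic sectional curvature thus has the form $\Pi = \mathrm{span}\{X, JX\}$ for a unit vector $X = X_1 + X_2$, with $X_i \in T_{p_i}\HC$; then $JX = JX_1 + JX_2$ with each $JX_i$ remaining in the same factor as $X_i$.

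Next, since the Levi-Civita connection on a Riemannian product is the direct sum of the connections on the factors, the curvature tensor has no cross terms between the two summands (the mechanism already exploited in \lemref{lem}). Applied to the four-tensor entering the definition of $K(\Pi)$, this decomposition gives
\[
R(X, JX, JX, X) = R_1(X_1, JX_1, JX_1, X_1) + R_2(X_2, JX_2, JX_2, X_2).
\]
Because each factor has constant holomorphic sectional curvature $-1$ and the four-tensor $R_i(\,\cdot\,, J\cdot, J\cdot, \cdot)$ is homogeneous of degree four in its argument, we obtain $R_i(X_i, JX_i, JX_i, X_i) = -|X_i|^4$, with norms measured in the respective factor metric.

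Writing $a = |X_1|^2$ so that $|X_2|^2 = 1 - a$, the hypothesis $K(\Pi) = -1$ becomes
\[
-a^2 - (1-a)^2 = -1,
\]
which simplifies to $2a(a-1) = 0$. Hence $a \in \{0,1\}$, so $X$ lies entirely in one factor; consequently $\Pi = \mathrm{span}\{X, JX\}$ is contained in $T_{p_1}\HC \oplus \{0\}$ or $\{0\} \oplus T_{p_2}\HC$, i.e.\ in the tangent space to the totally geodesic submanifold $\HC \times \{p_2\}$ or $\{p_1\} \times \HC$, which is the claim.

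I do not anticipate any serious difficulty: the argument is elementary once the product splitting of the curvature tensor is in hand. The one point that must be handled carefully is the implicit $J$-invariance of $\Pi$ together with the fact that $J$ preserves the factor decomposition, which is precisely what allows the four-tensor to split with no off-diagonal contributions.
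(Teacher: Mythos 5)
Your proposal is correct, and it is in fact sharper than the paper's own argument. The paper proceeds by invoking \lemref{lem}, which says that a $2$-plane spanned by one vector tangent to the first factor and one tangent to the second has sectional curvature $0$, and then concludes rather quickly that a $J$-invariant plane of curvature $-1$ must lie in a single factor; as written, that lemma only covers ``split'' planes and does not directly treat the general case $X=(X_1,X_2)$ with both components nonzero, which is exactly the case one must rule out. You instead use the same underlying mechanism (the curvature tensor of a Riemannian product has no cross terms) but push it to a quantitative conclusion: since $J$ preserves the factor splitting, $R(X,JX,JX,X)=R_1(X_1,JX_1,JX_1,X_1)+R_2(X_2,JX_2,JX_2,X_2)=-|X_1|^4-|X_2|^4$ by constancy of the holomorphic sectional curvature and degree-four homogeneity, and then $-a^2-(1-a)^2=-1$ forces $a\in\{0,1\}$. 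This computation handles an arbitrary $J$-invariant plane in one stroke and closes the gap left by the paper's appeal to the mixed-plane lemma; your preliminary remark that holomorphic sectional curvature is only defined (and independent of the choice of unit vector) for $J$-invariant planes also matches the hypothesis the paper implicitly uses in its proof. The only cosmetic point is the statement's phrase ``isomorphic to $\mathbb{H}^2_{\mathbb{C}}\times\{z\}$,'' which you correctly interpret as the plane being contained in the tangent space of one totally geodesic factor.
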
\label{-1}
    \begin{proof}
    For $p=(x,y) \in M$, assume that $\Pi$ is a plane in $T_p(\mathbb{H}^2_{\mathbb{C}} \times \mathbb{H}^2_{\mathbb{C}})$ that is invariant under $J$ and $K(\Pi) = -1$. Let $X = (X_1,X_2) \in T_x(\mathbb{H}^2_{\mathbb{C}}) \oplus T_y(\mathbb{H}^2_{\mathbb{C}}) \simeq T_p(\mathbb{H}^2_{\mathbb{C}} \times \mathbb{H}^2_{\mathbb{C}})$ be a unit vector. Note that $JX$ lies inside the plane $\Pi$ as $\Pi$ is $J$ invariant. Assuming $JX=Y=(Y_1,Y_2)$, the holomorphic sectional curvature is the sectional curvature of the plane spanned by the vectors $X$ and $Y$ and
        \[K(\text{X,Y}) =K(\Pi)= -1.\]
    Now, using the lemma \ref{lem}, $K(\Pi) = 0$ if either $(X_2,Y_1) = 0$ or $(X_1,Y_2) = 0$. Therefore, the vectors $X$ and $Y$ are completely within $T(\mathbb{H}^2_{\mathbb{C}} \times \{z\}) \text{ or } T(\{z\} \times \mathbb{H}^2_{\mathbb{C}})$. 
    \end{proof}

\section{Isometry Group of complex bidisc}\label{sec:isometries}

The group of holomorphic isometries of $\mathbb{H}^2_\mathbb{C}$ is the projective unitary group $\mathrm{PU}(2,1)$, which consists of all complex linear transformations preserving the Hermitian form up to scalar multiplication:
$
\mathrm{PU}(2,1) = \mathrm{SU}(2,1)/\lbrace {I,\omega I, \omega^2 I}\rbrace.
$ Here $\omega$ is the cube root of unity.
This group acts transitively on $\mathbb{H}^2_\mathbb{C}$, and the stabilizer of a point is isomorphic to $\mathrm{U}(2)$. Thus,
\[
\mathbb{H}^2_\mathbb{C} \cong \mathrm{PU}(2,1)/\mathrm{U}(2).
\]
\begin{definition}
    An element \( g \in \mathrm{PU}(2,1) \) is called \emph{loxodromic} if it fixes exactly two points on the boundary \( \partial \mathbb{H}^2_\mathbb{C} \) and none inside \( \mathbb{H}^2_\mathbb{C} \).
    \end{definition}
    Geometrically, such an isometry acts as a translation along a complex geodesic, possibly combined with a rotation about that geodesic. Together with elliptic and parabolic types, they form the standard classification of isometries in complex hyperbolic space.

\subsubsection*{Example}

Let \( A \in \mathrm{SU}(2,1) \) be given by:
\[
A = \begin{pmatrix}
\lambda & 0 & 0 \\
0 & 1 & 0 \\
0 & 0 & \lambda^{-1}
\end{pmatrix}, \quad \text{with } |\lambda| > 1.
\]
This matrix preserves the standard Hermitian form of signature \( (2,1) \), and it fixes the boundary points \( [1,0,0] \) and \( [0,0,1] \). The induced transformation in \( \mathrm{PU}(2,1) \) is loxodromic.
\subsection{Isometry group of the complex bidisk}
Let  
\[
i : \mathbb{H}^2_{\mathbb{C}} \times \mathbb{H}^2_{\mathbb{C}} \rightarrow \mathbb{H}^2_{\mathbb{C}} \times \mathbb{H}^2_{\mathbb{C}}
\]  
be the involution defined by swapping coordinates:
\[
i(z, w) = (w, z).
\]

\begin{theorem}
The full isometry group of the complex bidisk satisfies
\[
\mathrm{Isom}(\mathbb{H}^2_{\mathbb{C}} \times \mathbb{H}^2_{\mathbb{C}}) \cong \left(\mathrm{Isom}(\mathbb{H}^2_{\mathbb{C}})\right)^2 \rtimes \langle i \rangle.
\]
\end{theorem}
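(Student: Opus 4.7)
The inclusion $(\mathrm{Isom}(\HC))^2 \rtimes \langle i\rangle \subseteq \mathrm{Isom}(\HC\times\HC)$ is immediate: $\rho$ is defined factorwise and is symmetric in its two arguments, so each pair $(g_1,g_2)$ and the swap $i$ preserve it. My plan for the reverse inclusion is to use curvature rigidity to force any $\phi\in\mathrm{Isom}(\HC\times\HC)$ to respect the product decomposition up to a possible factor swap.

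The first step is to strengthen \propref{-1} by removing the $J$-invariance hypothesis: I want to show that a $2$-plane $\Pi \subset T_p(\HC\times\HC)$ has sectional curvature $-1$ if and only if $\Pi$ is a complex line lying entirely in one of the factor tangent spaces $T_x\HC\oplus 0$ or $0\oplus T_y\HC$. Picking an orthonormal basis $X=X_1+X_2$, $Y=Y_1+Y_2$ of $\Pi$ adapted to the tangent splitting, one has
\[
K(\Pi)=K_1(\mathrm{span}\{X_1,Y_1\})\,A_1^2 + K_2(\mathrm{span}\{X_2,Y_2\})\,A_2^2,
\]
where $A_i^2$ is the Gram determinant of $\{X_i,Y_i\}$ in the $i$-th factor. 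Expanding $|X|^2|Y|^2-\inner{X}{Y}^2=1$ and applying Cauchy--Schwarz yields $A_1^2+A_2^2 \le 1$; combined with the bounds $-1\le K_i \le -1/4$ on factor sectional curvatures and the fact that equality $K_i=-1$ is attained only on complex lines, one obtains $K(\Pi)\ge -1$ with equality forcing one $A_i$ to vanish and the other span to be a complex line in the surviving factor.

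Since a Riemannian isometry preserves sectional curvature, $\phi$ preserves the collection of $-1$-curvature $2$-planes at each point $p=(x,y)$. These planes partition into two classes according to the factor in which they lie, and the two $4$-dimensional subspaces $T_x\HC\oplus 0$ and $0\oplus T_y\HC$ are recovered intrinsically as the linear spans of these classes. Hence $d\phi_p$ either preserves each factor subspace or interchanges them, and by continuity on the connected manifold this dichotomy is globally constant.

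In the non-swapping case, $\phi$ preserves the two integrable distributions coming from the product structure, hence maps horizontal slices $\HC\times\{y\}$ to horizontal slices and vertical slices to vertical slices. Comparing the images of a horizontal and a vertical slice through the same point $(x,y)$ forces $\phi(x,y)=(\phi_1(x),\phi_2(y))$ for some maps $\phi_j\colon\HC\to\HC$, and restricting $\phi$ to any single slice shows that each $\phi_j$ is an isometry of $\HC$. In the swapping case, $i\circ\phi$ falls under the previous case, completing the argument. I expect the main technical obstacle to be the strengthened curvature characterization, which requires careful bookkeeping of Gram determinants and Cauchy--Schwarz estimates; once it is in hand the remaining steps amount to a standard de Rham--style integration.
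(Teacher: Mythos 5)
Your proposal is correct, and it follows the same basic strategy as the paper -- curvature rigidity forcing any isometry to respect the product splitting up to the swap $i$ -- but your key lemma is genuinely stronger than the one the paper uses, and this matters. The paper's Proposition classifies only $J$-invariant $2$-planes of holomorphic sectional curvature $-1$, and its proof of the theorem then asserts that an isometry carries the slices $\HC\times\{z\}$, $\{z\}\times\HC$ to slices because they are totally geodesic of curvature $-1$; but a Riemannian isometry of the product need not preserve the complex structure (the full isometry group of each factor already contains anti-holomorphic elements), so the $J$-invariant classification does not directly apply to an arbitrary $\phi$. Your strengthened statement -- that \emph{every} $2$-plane of sectional curvature $-1$ in $T_p(\HC\times\HC)$ is a complex line inside one factor -- closes exactly this gap, and your sketch of it is sound: with $X,Y$ orthonormal one has $K(\Pi)=K_1A_1^2+K_2A_2^2$, the expansion of $|X|^2|Y|^2-\inner{X}{Y}^2=1$ together with AM--GM/Cauchy--Schwarz gives $A_1^2+A_2^2\le 1$, and the quarter-pinching $-1\le K_i\le -1/4$ forces, in the equality case, that one pair of components vanishes entirely and the surviving plane is holomorphic (the equality analysis does check out: $A_1^2+A_2^2=1$ is incompatible with all four components being nonzero). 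From there your intrinsic recovery of the pair of factor subspaces (as the maximal linear subspaces contained in the union of the $-1$-planes), the locally constant preserve-or-swap dichotomy, and the de Rham--style splitting into $\phi=(\phi_1,\phi_2)$ or $i\circ(\phi_1,\phi_2)$ are all standard and correct. In short, the paper's argument is shorter but leans on a lemma that does not literally cover non-holomorphic isometries, whereas your version costs some Gram-determinant bookkeeping and buys a complete proof; it would be worth writing out the equality case of the pinching bound and the slice-to-slice step explicitly, since those carry the real content.
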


\begin{proof}
Let \( \Gamma = \mathrm{Isom}(\mathbb{H}^2_{\mathbb{C}} \times \mathbb{H}^2_{\mathbb{C}}) \) and \( G = \left(\mathrm{Isom}(\mathbb{H}^2_{\mathbb{C}})\right)^2 \). We first establish the inclusion:
\[
\Gamma \subseteq G \rtimes \langle i \rangle.
\]

Let \( \gamma \in \Gamma \). The product metric on \( \mathbb{H}^2_{\mathbb{C}} \times \mathbb{H}^2_{\mathbb{C}} \) induces totally geodesic subspaces with constant curvature $-1$ of the form by \thmref{-1}:
\[
P = \mathbb{H}^2_{\mathbb{C}} \times \{ z \} \quad \text{or} \quad \{ z \} \times \mathbb{H}^2_{\mathbb{C}}.
\]

Since \( \gamma \) is an isometry, it must map such subspaces to totally geodesic subspaces with the same curvature. Hence, \( \gamma(P) \) must also be of the form \( \mathbb{H}^2_{\mathbb{C}} \times \{ z' \} \) or \( \{ z' \} \times \mathbb{H}^2_{\mathbb{C}} \). Without loss of generality, assume:
\[
\gamma(P) = \mathbb{H}^2_{\mathbb{C}} \times \{ z' \}.
\]

Consider two points \( (x, z), (y, z) \in \mathbb{H}^2_{\mathbb{C}} \times \mathbb{H}^2_{\mathbb{C}} \). Since \( \gamma \) is an isometry:
\begin{align*}
\rho((x, z), (y, z)) &= \rho(\gamma(x, z), \gamma(y, z)), \\
d_{\mathbb{H}^2_{\mathbb{C}}}(x, y) &= d_{\mathbb{H}^2_{\mathbb{C}}}(x', y'),
\end{align*}
where \( \gamma(x, z) = (x', z') \) and \( \gamma(y, z) = (y', z') \). This implies that the map \( x \mapsto x' \) is an isometry of \( \mathbb{H}^2_{\mathbb{C}} \). Therefore, \( \gamma \in G \), or else \( \gamma \in i \circ G \). Indeed, $G$ is a normal subgroup of the full isometry group, and the group $\langle i \rangle \cong \mathbb{Z}/2\mathbb{Z}$ acts on $G$ by swapping the two factors. Explicitly, for any $(g_1, g_2) \in G$, we have
\[
i \circ (g_1, g_2) \circ i^{-1} = (g_2, g_1).
\] Hence:
\[
\Gamma \subseteq G \rtimes \langle i \rangle.
\]

Conversely, any pair \( (g_1, g_2) \in G \), where \( g_1, g_2 \in \mathrm{Isom}(\mathbb{H}^2_{\mathbb{C}}) \), acts coordinate-wise and is clearly an isometry. The swapping map \( i \) is also an isometry, and it conjugates elements of \( G \) via:
\[
i \circ (g_1, g_2) \circ i = (g_2, g_1).
\]

Therefore, \( G \rtimes \langle i \rangle \subseteq \Gamma \), completing the proof.
\end{proof}

\section{Equidistant hypersurface }
Let $(z,w)\in \mathbb{H}^2_{\mathbb{C}}\times \mathbb{H}^2_{\mathbb{C}}.$ The set of all points that are equidistant from each $z=(z_1,z_2)$ and $w=(w_1,w_2)$ is denoted by \begin{equation}
    E(z,w)= \{ x=(x_1,x_2)\in \mathbb{H}^2_{\mathbb{C}}\times \mathbb{H}^2_{\mathbb{C}}\mid \rho(x,z)=\rho(x,w)\}
.\end{equation}
Now \begin{equation*}
    \begin{split}
        \rho(x,z)& =\rho(x,w)\\
        \Rightarrow (d_{\mathbb{H}^2_{\mathbb{C}}}(x_1,z_1))^2+(d_{\mathbb{H}^2_{\mathbb{C}}}(x_2,z_2))^2& = (d_{\mathbb{H}^2_{\mathbb{C}}}(x_1,w_1))^2+(d_{\mathbb{H}^2_{\mathbb{C}}}(x_2,w_2))^2\\ \Rightarrow (d_{\mathbb{H}^2_{\mathbb{C}}}(x_1,z_1))^2-(d_{\mathbb{H}^2_{\mathbb{C}}}(x_1,w_1))^2 &= (d_{\mathbb{H}^2_{\mathbb{C}}}(x_2,w_2))^2-(d_{\mathbb{H}^2_{\mathbb{C}}}(x_2,z_2))^2
    \end{split}
\end{equation*}
For $k\in \mathbb{R}$,  consider the level sets
\begin{equation*}
    \begin{split}
        S_k^1:= \{ x_1~|~(d_{\mathbb{H}^2_{\mathbb{C}}}(x_1,z_1))^2-(d_{\mathbb{H}^2_{\mathbb{C}}}(x_1,w_1))^2 =k  \}\\
        S_k^2:=\{x_2~|~ (d_{\mathbb{H}^2_{\mathbb{C}}}(x_2,w_2))^2-(d_{\mathbb{H}^2_{\mathbb{C}}}(x_2,z_2))^2 =k\}
    \end{split}
\end{equation*}
So, $S_1^k, \text{ and } S_2^k$ represent square hyperbolas.  Let $E_k(z,w)=S_k^1\times S_k^2.$
Hence, \begin{equation}\label{union}
    E(z,w)=\displaystyle \cup_k E_k(z,w).
\end{equation}

\subsection{Accumulation Points of Level Sets in Complex Hyperbolic Space}

\begin{theorem}
Let $\xi \in \partial \mathbb{H}^2_{\mathbb{C}}$ and let $(x_n) \subset \mathbb{H}^2_{\mathbb{C}}$ be a sequence converging to $\xi$. Then for any fixed $z \in \mathbb{H}^2_{\mathbb{C}}$, the hyperbolic distance satisfies
\[
d(x_n, z) = B_\xi(z, o) - \log(1 - \|x_n\|) + o(1),
\]
where $o(1) \to 0$ as $n \to \infty$.
\end{theorem}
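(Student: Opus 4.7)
The plan is to split the distance as
\begin{equation*}
d(x_n, z) = d(x_n, o) + \bigl[d(x_n, z) - d(x_n, o)\bigr],
\end{equation*}
so that by the very definition of the Busemann function recorded in the preliminaries, the bracketed term converges to $B_\xi(z, o)$ as $x_n \to \xi$, supplying the $B_\xi(z, o) + o(1)$ piece for free. The theorem then reduces to the single asymptotic identity
\begin{equation*}
d(x_n, o) = -\log(1 - \|x_n\|) + o(1)
\end{equation*}
for the Bergman distance from $x_n$ to the origin $o$ of the ball model.

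To establish this reduction, I would specialize the Bergman distance formula to the pair $(x_n, o)$. With the standard lifts $o \mapsto (1, 0, 0)$ and $x_n \mapsto (1, (x_n)_1, (x_n)_2)$, the inner product $|\langle x_n, o \rangle|$ is identically $1$, yielding
\begin{equation*}
\cosh^2\!\bigl(d(x_n, o)/2\bigr) = \frac{1}{1 - \|x_n\|^2}.
\end{equation*}
Inverting via $\mathrm{arccosh}(s) = \log\bigl(s + \sqrt{s^2 - 1}\bigr)$ and letting $\|x_n\| \to 1$, a routine expansion gives $d(x_n, o) = -\log(1 - \|x_n\|^2) + O(1)$. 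Using the factorization $1 - \|x_n\|^2 = (1 - \|x_n\|)(1 + \|x_n\|)$ and the fact that $\log(1 + \|x_n\|) \to \log 2$, the residual constant is absorbed into the normalization of $B_\xi(z, o)$, producing the claimed formula.

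The main obstacle is not computational but careful bookkeeping of additive constants: one must reconcile the $\log 2$-type terms that arise from the large-distance asymptotic $\cosh^2(d/2) \sim e^d/4$ and from factoring $1 - \|x_n\|^2$ with the particular normalization of the Busemann function adopted in the preliminaries. A smaller technical point to check is that the nontangential-approach qualifier in the definition of $b_{\xi, o}$ imposes no genuine restriction on $(x_n)$; in a strictly negatively curved rank-one symmetric space such as $\mathbb{H}^2_{\mathbb{C}}$, the Busemann cocycle extends continuously to the ideal boundary, so every sequence $x_n \to \xi$ produces the same limiting value of $d(\cdot, x_n) - d(o, x_n)$.
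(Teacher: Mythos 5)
Your route is genuinely different from the paper's. The paper expands $\cosh^2\!\left(d(x_n,z)/2\right)$ from the two-point distance formula and then identifies the $z$-dependent terms with the explicit ball-model expression for the Busemann function; you instead split $d(x_n,z)=d(x_n,o)+\left[d(x_n,z)-d(x_n,o)\right]$, feed the bracket into the defining limit of $b_{\xi,o}$, and only compute the radial distance $d(x_n,o)$. This is cleaner: it needs no explicit formula for $B_\xi$ (where the paper in fact introduces a sign slip, writing $\log\frac{1-\|z\|^2}{|\langle z,\xi\rangle|^2}$ for what the defining limit actually gives as $\log\frac{|\langle z,\xi\rangle|^2}{1-\|z\|^2}$), and your remark about the nontangential qualifier is correct: since $\mathbb{H}^2_{\mathbb{C}}$ is strictly negatively curved (CAT($-1/4$) in this normalization), $d(z,x_n)-d(o,x_n)$ converges to $B_\xi(z,o)$ along every sequence $x_n\to\xi$, so the bracket converges as claimed.

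The one step that does not hold up is the final bookkeeping. From $\cosh^2\!\left(d(x_n,o)/2\right)=(1-\|x_n\|^2)^{-1}$ one gets exactly $d(x_n,o)=\log\frac{1+\|x_n\|}{1-\|x_n\|}=-\log(1-\|x_n\|)+\log 2+o(1)$, so the residual constant is precisely $\log 2$, and it cannot be ``absorbed into the normalization of $B_\xi(z,o)$'': the preliminaries fix that normalization by $b_{\xi,o}(o)=0$, and the defining limit leaves no freedom to shift it. What your argument actually proves is $d(x_n,z)=B_\xi(z,o)-\log(1-\|x_n\|)+\log 2+o(1)$. To be fair, the paper's own proof makes the same elision twice (dropping the $2\log 2$ from $\cosh t\sim\tfrac12 e^t$ and then declaring a $\log 2$ to be $o(1)$), so the displayed statement is itself off by this additive constant; the discrepancy is harmless for the only later use of the theorem, namely the differences $d^2(x_n,z)-d^2(x_n,w)$, where any constant independent of $z,w$ cancels. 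A correct write-up of your argument should therefore either carry the $+\log 2$ explicitly (and note that the theorem should be so amended, or stated with $-\log(1-\|x_n\|^2)$ up to a constant), rather than claiming it disappears.
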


\begin{proof} We use the ball model for  $\mathbb{H}^2$. 
As $x_n \to \xi \in \partial \mathbb{H}^2_{\mathbb{C}}$, we have $\|x_n\| \to 1$. From the distance formula,
\[
\cosh^2\left( \frac{d(x_n, z)}{2} \right) = \frac{|\langle x_n, z \rangle|^2}{(1 - \|x_n\|^2)(1 - \|z\|^2)}.
\]
Taking logarithms and expanding using $\cosh(t) \sim \frac{1}{2}e^{t}$ as $t \to \infty$, we obtain
\[
d(x_n, z) \sim 2 \log \left( \frac{|\langle x_n, z \rangle|}{\sqrt{(1-\|x_n\|^2)(1-\|z\|^2)}} \right).
\]
Rearranging gives
\[
d(x_n, z) = -\log(1 - \|x_n\|^2) + \log(|\langle x_n, z \rangle|^2) - \log(1 - \|z\|^2) + o(1).
\]

As $x_n \to \xi$, we have
\[
|\langle x_n, z \rangle| \sim |\langle \xi, z \rangle|,
\]
and so
\[
\log(|\langle x_n, z \rangle|^2) = \log(|\langle \xi, z \rangle|^2) + o(1).
\]

Therefore,
\[
d(x_n, z) = -\log(1 - \|x_n\|^2) + \log(|\langle \xi, z \rangle|^2) - \log(1 - \|z\|^2) + o(1).
\]

Recognizing that
\[
B_\xi(z, o) = \log\left( \frac{1-\|z\|^2}{|\langle z, \xi \rangle|^2} \right),
\]
we can rewrite
\[
\log(|\langle \xi, z \rangle|^2) - \log(1 - \|z\|^2) = -B_\xi(z, o).
\]

Thus, finally,
\[
d(x_n, z) = B_\xi(z, o) - \log(1 - \|x_n\|^2) + o(1).
\]

Since near the boundary $1-\|x_n\|^2 \sim 2(1-\|x_n\|)$ (as $\|x_n\| \to 1$), we can replace $1-\|x_n\|^2$ by $2(1-\|x_n\|)$ inside the logarithm, absorbing the factor $\log 2$ into the $o(1)$ term.
\end{proof}

\begin{theorem}
Let \( z, w \in \mathbb{H}^2_{\mathbb{C}} \), and for each \( k \in \mathbb{R} \), define the level set
\[
S_k(z,w) := \{ x \in \mathbb{H}^2_{\mathbb{C}} \mid d^2(x,z) - d^2(x,w) = k \}.
\]
Then
\[
\overline{S_k(z,w)} \cap \partial \mathbb{H}^2_{\mathbb{C}} \subseteq \overline{S_0(z,w)} \cap \partial \mathbb{H}^2_{\mathbb{C}}.
\]
\end{theorem}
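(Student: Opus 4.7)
The approach is to exploit the asymptotic distance formula from the preceding theorem to reduce the defining equation of $S_k$ to a horospherical condition, and then match that condition with the analogous one defining $\overline{S_0(z,w)} \cap \partial \mathbb{H}^2_{\mathbb{C}}$.

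Concretely, I fix $\xi \in \overline{S_k(z,w)} \cap \partial \mathbb{H}^2_{\mathbb{C}}$ and pick $x_n \in S_k(z,w)$ with $x_n \to \xi$. Applying the preceding theorem to both $z$ and $w$,
\[
d(x_n, z) = B_\xi(z, o) + L_n + o(1), \qquad d(x_n, w) = B_\xi(w, o) + L_n + o(1),
\]
where $L_n := -\log(1 - \|x_n\|) \to \infty$. Squaring and subtracting, the $L_n^2$ contributions cancel and the leading term becomes linear in $L_n$:
\[
k \;=\; d^2(x_n, z) - d^2(x_n, w) \;=\; 2 L_n\bigl[B_\xi(z, o) - B_\xi(w, o)\bigr] + O(1).
\]
Since the left-hand side is constant while $L_n \to \infty$, dividing by $L_n$ and passing to the limit forces $B_\xi(z, o) = B_\xi(w, o)$. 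Applying the same reasoning with $k = 0$ shows that $\overline{S_0(z,w)} \cap \partial \mathbb{H}^2_{\mathbb{C}}$ also sits inside the horospherical bisector $\mathcal{B} := \{\xi : B_\xi(z, o) = B_\xi(w, o)\}$.

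It remains to establish the reverse inclusion $\mathcal{B} \subseteq \overline{S_0(z,w)} \cap \partial \mathbb{H}^2_{\mathbb{C}}$: given $\xi$ with $B_\xi(z,o) = B_\xi(w,o)$, I want to produce $y_n \in S_0(z,w)$ with $y_n \to \xi$. The proposed construction is a horospherical gradient flow. Setting $f(x) = d^2(x,z) - d^2(x,w)$, the gradient at $x_n$ is $\nabla f(x_n) = 2 d(x_n,z) \hat v_z - 2 d(x_n,w) \hat v_w \approx 2 L_n(\hat v_z - \hat v_w)$, where $\hat v_z, \hat v_w$ denote the unit tangents at $x_n$ pointing away from $z$ and $w$. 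Because both $\hat v_z$ and $\hat v_w$ limit to the inward radial direction at $\xi$, their difference is asymptotically tangent to the horosphere through $x_n$ based at $\xi$; flowing $x_n$ along $-\nabla f$ therefore preserves horospherical height to leading order, and after a flow time $\tau_n \sim k/\|\nabla f(x_n)\|$ the value of $f$ drops from $k$ to $0$, yielding $y_n \in S_0$ with $\|y_n\| \approx \|x_n\|$, hence $y_n \to \xi$ in the visual compactification. The main obstacle is making this rigorous: the hyperbolic norm $\|\nabla f\|$ actually tends to zero, so $\tau_n \to \infty$, and one must carefully verify that over the long flow the horospherical displacement remains controlled enough for $y_n$ to converge to $\xi$ rather than drift to a different boundary point.
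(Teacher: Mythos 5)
The first half of your argument is essentially the paper's own proof: take $x_n\in S_k(z,w)$ with $x_n\to\xi$, apply the asymptotic formula $d(x_n,\cdot)=B_\xi(\cdot,o)+L_n+o(1)$ with $L_n=-\log(1-\|x_n\|)$, square, subtract, and force the coefficient of the divergent term $L_n$ to vanish, giving $B_\xi(z,o)=B_\xi(w,o)$. You are in fact more careful than the paper about what this yields: after squaring, the error is $o(L_n)$ (not $O(1)$, and certainly not $o(1)$), which is harmless for the division-by-$L_n$ step but does \emph{not} support the paper's further assertion that $d^2(x_n,z)-d^2(x_n,w)=o(1)$ and hence $k=0$; that final step of the paper is unjustified, and its conclusion would even conflict with the next theorem, which produces boundary accumulation points of $S_k$ for every $k$. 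So your reading --- that the expansion only gives $\overline{S_k(z,w)}\cap\partial\mathbb{H}^2_{\mathbb{C}}\subseteq\mathcal{B}:=\{\xi\in\partial\mathbb{H}^2_{\mathbb{C}} : B_\xi(z,o)=B_\xi(w,o)\}$ --- is the correct one, and you rightly see that an extra step is then needed.

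The genuine gap is precisely in that extra step. To obtain the stated inclusion you must prove $\mathcal{B}\subseteq\overline{S_0(z,w)}\cap\partial\mathbb{H}^2_{\mathbb{C}}$ (or directly produce points of $S_0$ converging to $\xi$), and the gradient-flow construction you sketch does not do this. As you concede, $\|\nabla f(x_n)\|\to 0$ near $\mathcal{B}$: the angle at $x_n$ between the directions to $z$ and to $w$ decays exponentially in the distance, and $d(x_n,z)-d(x_n,w)\to B_\xi(z,o)-B_\xi(w,o)=0$, so the time needed for the flow to change $f$ by $k$ is unbounded, and nothing in the sketch controls the endpoint of such a long flow line; moreover keeping $\|y_n\|\approx\|x_n\|$ is not enough to force $y_n\to\xi$, since angular drift along the sphere is exactly what must be excluded. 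Until this is supplied you have only $\overline{S_k}\cap\partial\subseteq\mathcal{B}$ and $\overline{S_0}\cap\partial\subseteq\mathcal{B}$, which is weaker than the theorem. A cleaner way to finish avoids the flow altogether: $S_0(z,w)$ is the metric bisector of $z,w$, i.e.\ the zero set in $\mathbb{B}^2$ of $G(x)=|\langle \mathbf{x},\mathbf{z}\rangle|^2/(1-\|z\|^2)-|\langle \mathbf{x},\mathbf{w}\rangle|^2/(1-\|w\|^2)$ (affine lifts), a function that extends continuously to the closed ball and whose boundary zero set is exactly $\mathcal{B}$ by the paper's formula for $B_\xi$; the structure theory of bisectors (Goldman, Parker) identifies this boundary set with the spinal sphere and shows every point of it is an ideal endpoint of a slice contained in the bisector, hence a limit of points of $S_0$. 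Quoting or reproving that identification would close your gap.
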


\begin{proof}
We work in the ball model of \(\mathbb{H}^2_{\mathbb{C}}\), where the boundary \(\partial \mathbb{H}^2_{\mathbb{C}}\) is identified with the unit sphere \(S^3\).

Let \(\xi \in \partial \mathbb{H}^2_{\mathbb{C}}\) be an accumulation point of \(S_k(z,w)\). Then there exists a sequence \((x_n) \subset S_k(z,w)\) such that \(x_n \to \xi\).

Recall that the Busemann function \(B_\xi(\cdot,o)\) at \(\xi\) relative to the origin \(o\) is given by
\[
B_\xi(x,o) = \log \frac{1 - \|x\|^2}{|\langle x, \xi \rangle|^2}.
\]
Moreover, for points \(x_n\) tending to \(\xi\), the asymptotic behavior of the hyperbolic distance satisfies
\[
d(x_n, z) = B_\xi(z,o) - \log(1 - \|x_n\|) + o(1),
\]
where \(o(1)\) denotes a term tending to zero as \(n \to \infty\). Squaring both sides, we obtain
\[
d^2(x_n, z) = \left(B_\xi(z,o) - \log(1-\|x_n\|)\right)^2 + o(1),
\]
and similarly for \(w\).

Expanding the square gives
\[
\left(B_\xi(z,o) - \log(1-\|x_n\|)\right)^2 = \log^2(1-\|x_n\|) - 2B_\xi(z,o) \log(1-\|x_n\|) + B_\xi(z,o)^2,
\]
and similarly for \(w\).

Taking the difference \(d^2(x_n,z) - d^2(x_n,w)\), we find
\[
d^2(x_n,z) - d^2(x_n,w) = 2(B_\xi(w,o) - B_\xi(z,o)) \log(1-\|x_n\|) + (B_\xi(z,o)^2 - B_\xi(w,o)^2) + o(1).
\]

Since by assumption \(d^2(x_n,z) - d^2(x_n,w) = k\) for all \(n\), and \(\log(1-\|x_n\|) \to -\infty\) as \(n \to \infty\), the coefficient of \(\log(1-\|x_n\|)\) must vanish to prevent divergence.  
Thus,
\[
B_\xi(z,o) = B_\xi(w,o).
\]

Substituting back, we conclude
\[
d^2(x_n,z) - d^2(x_n,w) = B_\xi(z,o)^2 - B_\xi(w,o)^2 + o(1) = o(1),
\]
thus
\[
\lim_{n\to\infty} (d^2(x_n,z) - d^2(x_n,w)) = 0.
\]

But the sequence \((x_n)\) lies in \(S_k(z,w)\), so the difference is constantly \(k\). Therefore, necessarily \(k=0\). Thus, \(\xi\) is an accumulation point of \(S_0(z,w)\), and the claim follows.
\end{proof}

\begin{theorem}
Let \( z, w \in \mathbb{H}^2_{\mathbb{C}} \) and \( k \in \mathbb{R} \). Then
\[
\overline{S_0(z, w)} \cap \partial \mathbb{H}^2_{\mathbb{C}} \subseteq \overline{S_k(z, w)} \cap \partial \mathbb{H}^2_{\mathbb{C}}.
\]
\end{theorem}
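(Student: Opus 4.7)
The plan is to produce, for any given $\xi \in \overline{S_0(z,w)} \cap \partial \mathbb{H}^2_{\mathbb{C}}$ and any $k \in \mathbb{R}$, an explicit sequence in $S_k(z,w)$ converging to $\xi$. Set $f(x) := d^2(x,z) - d^2(x,w)$, so that $S_k = f^{-1}(k)$ and $f$ is continuous on $\mathbb{H}^2_{\mathbb{C}}$. The previous theorem tells us that $\xi \in \overline{S_0(z,w)} \cap \partial \mathbb{H}^2_{\mathbb{C}}$ already forces $B_\xi(z,o) = B_\xi(w,o)$, i.e.\ that $z$ and $w$ lie on a common horosphere based at $\xi$; this is the only consequence of the hypothesis that I will use.

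The mechanism is the intermediate value theorem. For any open neighborhood $U$ of $\xi$ in $\overline{\mathbb{H}^2_{\mathbb{C}}}$, the set $U \cap \mathbb{H}^2_{\mathbb{C}}$ is open and connected, so by continuity of $f$ it suffices to exhibit $y^+, y^- \in U \cap \mathbb{H}^2_{\mathbb{C}}$ with $f(y^+) > k > f(y^-)$: the IVT then produces $y \in U \cap \mathbb{H}^2_{\mathbb{C}}$ with $f(y) = k$, and letting $U$ shrink through a neighborhood base at $\xi$ gives the required sequence $y_n \in S_k$ with $y_n \to \xi$. The proof therefore reduces to showing that $f$ is unbounded above and below on every neighborhood of $\xi$ in the closed ball.

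The heart of the argument is a $0 \cdot \infty$ asymptotic. Factoring $f(y) = \bigl(d(y,z) - d(y,w)\bigr)\bigl(d(y,z) + d(y,w)\bigr)$, the second factor tends to $+\infty$ as $y \to \xi$ (at leading rate $-2\log(1-\|y\|)$, by the asymptotic expansion of the preceding two theorems), while the Busemann cocycle together with our hypothesis $B_\xi(z,o) = B_\xi(w,o)$ forces the first factor to $0$. The value of the resulting $0 \cdot \infty$ limit depends sharply on the direction of approach: along a radial geodesic from an interior basepoint, $d(y,z) - d(y,w)$ decays exponentially in $d(y,o)$ and $f(y) \to 0$, whereas non-radial (tangential) approaches produce much slower decay. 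I would parametrize sequences $y \to \xi$ by a tangential direction and a radial depth using horospherical coordinates attached to $\xi$ in the ball model, and show that as the tangential direction is varied at the appropriate scale relative to the depth, the limit of $f$ sweeps out all of $\mathbb{R}$; in particular, arbitrarily close to $\xi$ there are points of $\mathbb{H}^2_{\mathbb{C}}$ with $f > k$ and points with $f < k$, which closes the IVT argument.

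The principal obstacle is precisely this subleading expansion. The formula $d(y,z) = B_\xi(z,o) - \log(1-\|y\|) + o(1)$ that sufficed in the preceding theorems is only strong enough to detect that the first factor of $f(y)$ vanishes, not its rate of vanishing, and in our regime the rate is exactly what determines $f$. Sharpening this expansion in the complex hyperbolic ball, honestly keeping track of the CR/contact structure of $\partial \mathbb{H}^2_{\mathbb{C}}$, is the main technical work. The analogous calculation in the upper half-plane model of the real hyperbolic plane, where tangential sequences like $(x,y) = (T, e^{-cT})$ with $T \to \infty$ and $c$ varying can be computed directly and give $f$ converging to any prescribed real value, illustrates the mechanism and provides a template for the complex case.
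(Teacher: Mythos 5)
Your reduction via the intermediate value theorem is fine as far as it goes (neighborhoods $U$ of $\xi$ with $U\cap\mathbb{H}^2_{\mathbb{C}}$ connected, so it suffices to find points near $\xi$ with $f>k$ and $f<k$, where $f=d^2(\cdot,z)-d^2(\cdot,w)$), and your observation that $\xi\in\overline{S_0}\cap\partial\mathbb{H}^2_{\mathbb{C}}$ forces $B_\xi(z,o)=B_\xi(w,o)$ is a correct use of the preceding theorem's argument. But the heart of the statement is exactly the claim you defer: that by tilting the approach to $\xi$ (tangential direction at a suitable scale relative to depth) the $0\cdot\infty$ product $\bigl(d(y,z)-d(y,w)\bigr)\bigl(d(y,z)+d(y,w)\bigr)$ can be made to pass any prescribed value $k$. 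The leading-order expansion $d(y,z)=B_\xi(z,o)-\log(1-\|y\|)+o(1)$ that you quote only shows the first factor tends to $0$, with no rate, so nothing you have written controls the product; you acknowledge this and label the needed subleading expansion in the complex hyperbolic ball ``the main technical work,'' supported only by a computation in the upper half-plane model of the real hyperbolic plane. As submitted, then, this is a program with the decisive estimate missing, not a proof of the inclusion.

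It is worth noting that you have located precisely the point where the paper's own argument is thin. The paper takes $y_n\in S_0$ with $y_n\to\xi$, runs geodesic rays $\gamma_n$ from $y_n$ toward $\xi$, observes $f_n(0)=0$ and $\lim_{t\to\infty}f_n(t)=0$, and then invokes the intermediate value theorem to produce $t_n$ with $f_n(t_n)=k$; this does not follow (a continuous function vanishing at $t=0$ and at infinity need never attain $k$), and the boundedness of $t_n$ is also unjustified--gaps the authors themselves flag in the remark immediately afterwards and address only by an informal appeal to sufficient variation along well-chosen rays. So your route differs in mechanism (varying the direction of approach near $\xi$ rather than moving along a single ray issued from a point of $S_0$), and your half-plane computation is the right template; but to turn it into a proof you must actually carry out the corresponding expansion in $\mathbb{H}^2_{\mathbb{C}}$, for instance in horospherical or Heisenberg coordinates based at $\xi$, keeping the term after $-\log(1-\|y\|)$, and verify that tangential approaches at the appropriate scale make $f$ exceed any prescribed $k$ of either sign.
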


\begin{proof}
  
Let \(\xi \in \overline{S_0(z,w)} \cap \partial \mathbb{H}^2_{\mathbb{C}}\). Then there exists a sequence \( (y_n) \subset S_0(z,w) \) such that \( y_n \to \xi \).

For each \( n \), consider the geodesic ray \( \gamma_n:[0,\infty) \to \mathbb{H}^2_{\mathbb{C}} \) starting at \( y_n \) and limiting to \(\xi\), that is, \(\gamma_n(0) = y_n\) and \(\gamma_n(t) \to \xi\) as \( t \to \infty \).

Define the function
\[
f_n(t) := d^2(\gamma_n(t), z) - d^2(\gamma_n(t), w).
\]
Since the hyperbolic distance function is continuous and smooth in \(\mathbb{H}^2_{\mathbb{C}}\), the function \( f_n(t) \) is continuous in \( t \).

Moreover, since \( y_n \in S_0(z,w) \), we have \( f_n(0) = 0 \).

Near the boundary point \(\xi\), we know from asymptotic expansions (involving Busemann functions) that \( d(\gamma_n(t),z) \) and \( d(\gamma_n(t),w) \) both tend to infinity as \(t \to \infty\), and the difference \( d^2(\gamma_n(t),z) - d^2(\gamma_n(t),w) \) tends to zero. Thus,
\[
\lim_{t\to\infty} f_n(t) = 0.
\]

In summary, \( f_n(0) = 0 \) and \( \lim_{t\to\infty} f_n(t) = 0 \).

Since \(f_n\) is continuous, given any \(k \in \mathbb{R}\), for \(n\) sufficiently large, by the intermediate value theorem, there exists \( t_n \geq 0 \) such that
\[
f_n(t_n) = k.
\]

Define \(x_n := \gamma_n(t_n)\).

Then \(x_n \in S_k(z,w)\), and \(x_n \to \xi\) as \(n \to \infty\), because \(t_n\) remains bounded and \( \gamma_n(t_n) \to \xi \).

Therefore, \(\xi \in \overline{S_k(z,w)} \cap \partial \mathbb{H}^2_{\mathbb{C}}\).

Thus,
\[
\overline{S_0(z,w)} \cap \partial \mathbb{H}^2_{\mathbb{C}} \subseteq \overline{S_k(z,w)} \cap \partial \mathbb{H}^2_{\mathbb{C}}.
\]
This completes the proof. 
\end{proof}

\begin{remark}
In the proof above, the conclusion relies on the assumption that the function 
\[
f_n(t) = d^2(\gamma_n(t), z) - d^2(\gamma_n(t), w)
\]
attains the value \( k \in \mathbb{R} \) along a geodesic ray \( \gamma_n \) approaching the boundary point \( \xi \). This requires sufficient variation in the \[
f_n(t) = d^2(\gamma_n(t), z) - d^2(\gamma_n(t), w)
\] along the ray. In general, such variation may not be present in arbitrary metric spaces.

In the product space \( \mathbb{H}^2_{\mathbb{C}} \times \mathbb{H}^2_{\mathbb{C}} \), this condition is always satisfied. The function
\[
f_n(t) = d^2(\gamma_n(t), z) - d^2(\gamma_n(t), w)
\]
decomposes additively across the two factors of the product. On each factor, \( \mathbb{H}^2_{\mathbb{C}} \) the functions
\[
f_n(t) = d^2(\gamma_n(t), z) - d^2(\gamma_n(t), w)
\]
vary smoothly and strictly along geodesics. By selecting geodesic rays in the product space, whose projections onto each factor vary independently, one can ensure that the function \( f_n(t) \) attains any prescribed real value \( k \) for sufficiently large \( t \). As a result, the inclusion
\[
\overline{S_0(z,w)} \cap \partial \mathbb{H}^2_{\mathbb{C}} \subseteq \overline{S_k(z,w)} \cap \partial \mathbb{H}^2_{\mathbb{C}}
\]
holds for all \( k \in \mathbb{R} \) when the ambient space is \( \mathbb{H}^2_{\mathbb{C}} \times \mathbb{H}^2_{\mathbb{C}} \).
\end{remark}

Therefore, from the above theorems, it follows that the boundary points of \( S_0 \) coincide with those of the \( S_k \)'s. This shows that the boundary points of \( S_0^1 \) and \( S_0^2 \) are the same as those of \( S_k^1 \) and \( S_k^2 \), respectively.

\section{Proof of \thmref{mainth}}\label{sec:dirichlet}

 The intersections of
equidistant hypersurfaces are difficult to analyze. The study of the Dirichlet domain in the complex bidisc will require a more detailed understanding of the relative position of the disjoint equidistant hypersurfaces.  For this purpose, we introduce the
concept of invisibility.

Let $x=(x_1,x_2),~y=(y_1,y_2) \in \mathbb{H}^2_{\mathbb{C}} \times \mathbb{H}^2_{\mathbb{C}}$ and $\gamma \in Isom(\mathbb{H}^2_{\mathbb{C}} \times \mathbb{H}^2_{\mathbb{C}})$. The point $y$ is said to be $\gamma-$visible to $x$ if \begin{equation*}
    \begin{split}
        \rho (y,x)\leq & \rho (y,\gamma(x))\\ \text{and } \rho (y,x)\leq & \rho (y,\gamma^{-1}(x))
    \end{split}
\end{equation*}
Otherwise, we would say $y$ is $\gamma-$invisible to $x$. A subset $A \subseteq \mathbb{H}^2_{\mathbb{C}}\times \mathbb{H}^2_{\mathbb{C}}$ is said to $\gamma -$invisible to $x$ if every point of $A$ is $\gamma -$invisible to $x$.
\begin{lemma}\label{key}
    Let $x,y \in \mathbb{H}^2_{\mathbb{C}} \times \mathbb{H}^2_{\mathbb{C}}$ and $\gamma \in Isom(\mathbb{H}^2_{\mathbb{C}} \times \mathbb{H}^2_{\mathbb{C}})$. Suppose \begin{equation}\label{condition}
        \begin{split}
            E(x,y)\cap E(x,\gamma(x))=\emptyset \\ E(x,y)\cap E(x,\gamma^{-1}(x))=\emptyset
        \end{split}
    \end{equation}
\end{lemma}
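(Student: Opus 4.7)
The plan is to upgrade the bisector‑disjointness hypothesis \eqref{condition} into a uniform statement on the whole set $E(x,y)$: the natural conclusion I would aim for is that $E(x,y)$ is either entirely $\gamma$-visible to $x$ or entirely $\gamma$-invisible to $x$. The mechanism is a connectedness‑plus‑intermediate‑value argument.

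First I would introduce the two continuous functions
\[
\phi_+(w) := \rho(w,\gamma(x)) - \rho(w,x), \qquad \phi_-(w) := \rho(w,\gamma^{-1}(x)) - \rho(w,x),
\]
on $\HC \times \HC$. Their zero sets are exactly $E(x,\gamma(x))$ and $E(x,\gamma^{-1}(x))$, so the hypothesis \eqref{condition} is equivalent to saying that $\phi_+$ and $\phi_-$ are nowhere zero on $E(x,y)$. In particular, the visibility status of each $w\in E(x,y)$ is read off from the \emph{signs} of $\phi_\pm(w)$.

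The technical heart of the argument is the connectedness of $E(x,y)$. From \eqref{union} we have $E(x,y)=\bigcup_{k\in\mathbb{R}} S_k^1\times S_k^2$. Each factor $S_k^i\subset\HC$ is a level set of the smooth function $w\mapsto d_{\mathbb{H}^2_{\mathbb{C}}}(w,z_i)^2-d_{\mathbb{H}^2_{\mathbb{C}}}(w,w_i)^2$; because $\HC$ is a Hadamard manifold and its squared distance is strictly convex, this map is a proper submersion and each non‑empty level set is a connected smooth hypersurface. Hence every slice $S_k^1\times S_k^2$ is connected, and the continuous dependence on $k$, combined with the boundary accumulation results of the previous subsection (which identify the asymptotic boundaries of all the $S_k^i$), glues the one‑parameter family into a single connected union $E(x,y)$.

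Once connectedness is established, the intermediate value theorem applied to the continuous, nowhere‑vanishing functions $\phi_\pm$ on $E(x,y)$ forces each of them to have constant sign there. Since $y\in E(x,y)$, these signs are determined by $\phi_\pm(y)$ and thereby fix the visibility status of every point of $E(x,y)$, producing the claimed dichotomy. The principal obstacle I anticipate is precisely the connectedness of $\bigcup_k S_k^1\times S_k^2$ inside the product: while each individual slice is connected by standard Hadamard‑geometry arguments, a priori the union could split into components indexed by the sign of $k$, and ruling this out is exactly where the boundary‑accumulation lemmas of the preceding subsection do the essential work.
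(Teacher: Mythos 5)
Your mechanism is the same as the paper's. On the connected set $E(x,y)$ the continuous functions $\rho(\cdot,x)-\rho(\cdot,\gamma(x))$ and $\rho(\cdot,x)-\rho(\cdot,\gamma^{-1}(x))$ never vanish (their zero sets are $E(x,\gamma(x))$ and $E(x,\gamma^{-1}(x))$, which the hypothesis makes disjoint from $E(x,y)$), so by the intermediate value theorem each keeps a constant sign, and hence every point of $E(x,y)$ has the same visibility status. Since $E_0(x,y)$ is a nonempty subset of $E(x,y)$, your ``all visible or all invisible'' dichotomy does yield the lemma's actual conclusion, namely that $E(x,y)$ is $\gamma$-invisible to $x$ if and only if $E_0(x,y)$ is. The paper runs exactly this intermediate-value argument (with one test point taken from $E_0(x,y)$ and one hypothetical visible point of $E(x,y)$), so your formulation is only a repackaging of the same proof.

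Where your proposal overreaches is the connectedness of $E(x,y)$, which you rightly single out as the crux but then justify by claims that do not hold. The function $u\mapsto d_{\mathbb{H}^2_{\mathbb{C}}}(u,z_i)^2-d_{\mathbb{H}^2_{\mathbb{C}}}(u,w_i)^2$ is a submersion when $z_i\neq w_i$, but it is certainly not proper (its level sets are unbounded), and a submersion need not have connected level sets at all — for instance $(s,t)\mapsto e^{s}\sin t$ on $\mathbb{R}^2$ has nowhere-vanishing gradient yet disconnected levels — so connectedness of each $S^i_k$ requires a genuine argument (e.g.\ monotonicity of the function along suitable geodesics, or exhibiting $S^i_k$ as a graph over the bisector $S^i_0$). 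Moreover, the slices $E_k=S^1_k\times S^2_k$ are pairwise disjoint, so they cannot be ``glued'' by the boundary-accumulation theorems of the preceding section, which only compare ideal boundary points; connectedness of $\bigcup_k E_k$ needs a separate argument inside the space (for example, joining two points by moving in one factor while compensating in the other, using that each $F_i$ is surjective, critical-point-free, and has connected levels). To be fair, the paper itself merely asserts that $E(x,y)$ is connected without proof, so your proposal is not in worse shape than the printed argument; but as written, your supporting sketch for this step is not correct and should be replaced or the connectedness stated explicitly as an assumption/lemma to be proved.
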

then $E(x,y)$ is $\gamma-$ invisible to $x$ if and only if $E_0(x,y)$ is $\gamma-$ invisible to $x$.
\begin{proof}
    Let $E(x,y)$ is $\gamma-$invisible to $x$. Then $E_0(x,y)$ is also $\gamma-$invisible to $x$ also due to $equation\ref{union}$.\\ Now we assume that $E_0(x,y)$ is $\gamma-$invisible to x. If possible let $E(x,y)$ is not $\gamma-$invisible to $x$. As $E_0(x,y)$ is $\gamma-$invisible, so $\forall ~ w_0 \in E_0(x,y)$ one of the following hplds \begin{equation}
        \begin{split}
            \rho(w_0,x)>\rho(w_0,\gamma(x))\\ \text{ or } \rho(w_0,x)>\rho(w_0,\gamma^{-1}(x)).
        \end{split}
    \end{equation}
    Without loss of generality, we are assuming $\rho(w_0,x)>\rho(w_0,\gamma(x)).$ Consider the map \begin{equation}
        \begin{split}
            &f:E(x,y) \rightarrow \mathbb{R} \text{ 
defined as }\\&f(w)=\rho(w,x)-\rho(w,\gamma(x))
        \end{split}
    \end{equation}
    The $E(x,y)$ is connected, hence $f(E(x,y))$ is connected. So, $\exists ~w' \in E(x,y)$ such that \begin{equation}
        \begin{split}
            &f(w')=0\\ \Rightarrow &\rho(w',x)=\rho(w',\gamma(x))\\ \Rightarrow & w' \in E_0(x,y)\\ \Rightarrow & w' \in E(x,y)\cap E(x,\gamma(x)).
        \end{split}
    \end{equation} 
    This is a contradiction to our assumption \eqnref{condition}. So, $E(x,y)$ is $\gamma-$invisible to $x$.
\end{proof}
\subsection{Proof of \thmref{mainth}}

Let $\gamma \in Isom(\mathbb{H}^2_{\mathbb{C}} \times \mathbb{H}^2_{\mathbb{C}})$ and $z\in \mathbb{H}^2_{\mathbb{C}} \times \mathbb{H}^2_{\mathbb{C}}$. 
Now we assume ${E(z,\gamma(z))\cap E(z,\gamma^{-1}(z))}=\emptyset$,  then ${E_0(z,\gamma(z))\cap E_0(z,\gamma^{-1}(z))}=\emptyset$. Let $z\in \mathbb{H}^2_{\mathbb{C}}$ and $g\in Isom(\mathbb{H}^2_{\mathbb{C}})$. Then the equidistant plane between $z,~g(z)$ and $z,~g^{-1}(z)$ are disjoint, hence every equidistant plane between $z,g^j(z)$ and $z,g^{-j}(z)$ are $g-$invisible to $z$ for $j \neq 0,1$. According to \lemref{key}, the Dirichlet domain for the action of the cyclic group $\langle \gamma \rangle$ is enclosed by the equidistant hypersurfaces $E(z, \gamma(z))$ and $E(z, \gamma^{-1}(z))$.

\medskip We now aim to show that the equidistant hypersurfaces
\[
E(z, \gamma(z)) = \{ x \in \mathbb{H}^2_{\mathbb{C}} \times \mathbb{H}^2_{\mathbb{C}} : d(x, z) = d(x, \gamma(z)) \} ~~ \hbox{ and }
\]
\[
E(z, \gamma^{-1}(z)) = \{ x \in \mathbb{H}^2_{\mathbb{C}} \times \mathbb{H}^2_{\mathbb{C}} : d(x, z) = d(x, \gamma^{-1}(z)) \}
\]
are disjoint, i.e.,
\[
E(z, \gamma(z)) \cap E(z, \gamma^{-1}(z)) = \emptyset.
\]
This will prove the theorem. 

If possible,  suppose that there exists a point \( x \in \mathbb{H}^2_{\mathbb{C}} \times \mathbb{H}^2_{\mathbb{C}} \) such that \( x \in {E(z, \gamma(z)) \cap E(z, \gamma^{-1}(z))} \). Then,
\[
d(x, z) = d(x, \gamma(z)) = d(x, \gamma^{-1}(z)).
\]
This implies that the three points \( \gamma^{-1}(z), z, \gamma(z) \) all lie on a metric sphere centered at \( x \) with common radius.

Now consider the geodesic in \( \mathbb{H}^2_{\mathbb{C}} \times \mathbb{H}^2_{\mathbb{C}} \) that passes through \( \gamma^{-1}(z), z, \gamma(z) \). The geodesic will intersect the sphere centered at $x$ in three points in the product space.

Note that  \( \mathbb{H}^2_{\mathbb{C}} \times \mathbb{H}^2_{\mathbb{C}} \) is a Hadamard manifold: it is complete, simply connected, and has non-positive sectional curvature. A key property of Hadamard manifolds is that \emph{every metric sphere bounds a strictly convex ball}. In such a space, a geodesic can intersect a given metric sphere in at most two points.

However, under our assumption, the geodesic intersects the sphere centered at \( x \) in \emph{three distinct points}: \( \gamma^{-1}(z), z, \gamma(z) \), which contradicts the strict convexity of metric spheres in Hadamard manifolds.

Therefore, our assumption must be false, and we conclude that
\[
E(z, \gamma(z)) \cap E(z, \gamma^{-1}(z)) = \emptyset
\]

This implies that the Dirichlet domain centered at \( z \) for the cyclic group \( \langle \gamma \rangle \) is bounded by exactly two distinct hypersurfaces: one corresponding to \( \gamma \), and the other to \( \gamma^{-1} \). These are the only bisectors contributing to the boundary, as all other translates of \( z \) lie farther along the geodesic, and their associated equidistant hypersurfaces do not intersect this domain.

Hence, the Dirichlet domain has precisely two faces defined by the bisectors:
$
E(z, \gamma(z))$ {and}  $E(z, \gamma^{-1}(z))
$. \qed

\end{document}